
\documentclass{amsart}
\usepackage{amssymb}

\usepackage{stmaryrd}
\usepackage{mathrsfs}



\newtheorem{theorem}{Theorem}[section]
\newtheorem{lemma}[theorem]{Lemma}

\newtheorem{prop}[theorem]{Proposition}

\theoremstyle{definition}

\theoremstyle{remark}

\numberwithin{equation}{section}


\newcommand{\FF}{{\mathbb{F}}}


\newcommand{\PSL}{{\operatorname{PSL}}}
\newcommand{\ASL}{{\operatorname{ASL}}}
\newcommand{\PGaL}{{\operatorname{P\Gamma L}}}

\newcommand{\PGL}{{\operatorname{PGL}}}

\newcommand{\Alt}{{\operatorname{Alt}}}
\newcommand{\Sym}{{\operatorname{Sym}}}


\setlength{\oddsidemargin}{0pt} \setlength{\evensidemargin}{0pt}
\setlength{\textwidth}{16cm} \setlength{\headheight}{0pt}
\setlength{\footskip}{40pt}
\setlength{\textheight}{20cm}

\begin{document}

\title{Permutation Groups and Orbits on Power Sets}

\author{Yong Yang}
\address{Department of Mathematics, Texas State University, 601 University Drive, San Marcos, TX 78666, USA.}
\makeatletter
\email{yang@txstate.edu}
\makeatother

\subjclass[2000]{20B05}
\date{}



\begin{abstract}
Let $G$ be a permutation group of degree $n$ and let $s(G)$ denote the number of set-orbits of $G$. We determine $\inf(\frac {\log_2 s(G)} n)$ over all groups $G$ that satisfy certain restrictions on composition factors (i.e. $\Alt(k), k > 4$ cannot be obtained as a quotient of a subgroup of $G$).
\end{abstract}

\maketitle
\section{Introduction} \label{sec:introduction8}
A permutation group $G$ acting on a set $\Omega$ induces a permutation group on the power set $\mathscr{P}(\Omega)$. We call the orbits of this action set-orbits. Let $s(G)$ denote the number of set-orbits of $G$. Obviously $s(G)\geq |\Omega| + 1$ as sets of different cardinality belong to different orbits. 

In a paper of Babai and Pyber ~\cite[Theorem 1]{BAPYBER}, they show that if G has no large alternating composition factors then $s(G)$ is exponential in $n$. More precisely they prove the following result ~\cite[Theorem 1]{BAPYBER}. Let $G$ be a permutation group of degree $n$ and let $s(G)$ denote the number of set-orbits of $G$.  Assume that $G$ does not contain any alternating group $\Alt(k), k>t (t \geq 4)$ as a composition factor, then $\frac {\log_2 s(G)} n \geq \frac {C_1} t$ for some positive constant $C_1$.

In the same paper, they raise the following question, what is $\inf(\frac {\log_2 s(G)} n)$ over all solvable groups $G$? This question is answered in a recent paper of the author in ~\cite{YY11}. Clearly, a more interesting question is to answer the following question, what is $\inf(\frac {\log_2 s(G)} n)$ over all groups $G$ that does not contain any alternating group $\Alt(k), k>4$ as a composition factor?

We study this question in this paper. It turns out that $\inf(\frac {\log_2 s(G)} n)= \lim_{k \mapsto \infty} \frac {\log_2 s(M_{24} \wr M_{12} \wr S_4 \wr \cdots \wr S_4)} {24 \cdot 12 \cdot 4^k}$. Let $a_k= \frac {\log_2 s(M_{24} \wr M_{12} \wr S_4 \wr \cdots \wr S_4)} {24 \cdot 12 \cdot 4^k}$, we know that this sequence is decreasing and \

\[\lim_{k \mapsto \infty} a_k \approx 0.1712268716679245433. \]

Clearly, the main difficulty of this work is to identify the group that achieves the minimum bound.

\section{Notation and Lemmas} \label{sec:Notation and Lemmas}

Notation:
\begin{enumerate}
\item We use $H \wr S$ to denote the wreath product of $H$ with $S$ where $H$ is a group and $S$ is a permutation group.
\item Let $G$ be a solvable permutation group of degree $n$ and we use $s(G)$ to denote the number of set-orbits of $G$ and we denote $rs(G)=\frac {\log_2 s(G)} n$.\\
\end{enumerate}

We recall some basic facts about the decompositions of transitive groups. Let $G$ be a transitive permutation group acting on a set $\Omega$, $|\Omega| = n$. A system of imprimitivity is a partition of $\Omega$, invariant under $G$. A primitive group has no non-trivial system of imprimitivity. Let $(\Omega_1, \cdots, \Omega_m)$ denote a system of imprimitivity of $G$ with maximal block-size $b$ $(1 \leq b <n; b = 1$ if and only if $G$ is primitive; $bm=n$).

Let $H$ denote the normal subgroup of $G$ stabilizing each of the blocks $\Omega_i$. Then $G/H$ is a primitive group of degree $m$ acting upon the set of blocks $\Omega_i$. If $G_i$ denotes the permutation group of degree $b$ induced on $\Omega_i$ by the setwise stabilizer of $\Omega_i$ in $G$, then the groups $G_i$ are permutationally equivalent transitive groups and $H \leq G_1 \times \cdots \times G_m < \Sym(\Omega)$.

Let $G$ be a transitive group of degree $n$ and assume that $G$ is not primitive. Let us consider a system of imprimitivity of $G$ that consists of $m \geq 2$ blocks of size $b$, $b$ maximal. Thus $G \lesssim H \wr P_1$ where $P_1$ is the primitive quotient group of $G$ that acts upon the $m$ blocks. We may keep doing this, eventually we may view $G \lesssim K \wr P_j \cdots \wr P_1$ where $K$ is a primitive group and $P_i$ are all primitive groups. We say that $G$ is induced from the primitive group $K$.\\

\begin{lemma}  \label{lem1}
If $H \leq G \leq \Sym(\Omega)$, then $s(G) \leq s(H) \leq s(G) \cdot |G:H|$.
\end{lemma}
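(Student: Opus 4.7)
\medskip

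\noindent\textbf{Proof proposal.} The plan is to treat the two inequalities separately, since both are consequences of general facts about how $H$-orbits refine $G$-orbits when $H \le G$.

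For the left-hand inequality $s(G)\le s(H)$, the observation is that every $G$-orbit on $\mathscr{P}(\Omega)$ is a (disjoint) union of $H$-orbits, because $H \le G$ implies that the $H$-orbit of any set $A \subseteq \Omega$ is contained in the $G$-orbit of $A$. Thus the partition of $\mathscr{P}(\Omega)$ into $H$-orbits refines the partition into $G$-orbits, so it has at least as many parts.

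For the right-hand inequality $s(H)\le s(G)\cdot|G:H|$, I would fix an arbitrary $G$-orbit $\Theta \subseteq \mathscr{P}(\Omega)$, pick a representative $A \in \Theta$, and identify $\Theta$ with the coset space $G/G_A$, where $G_A$ is the setwise stabilizer of $A$ in $G$. Under this identification, the $H$-orbits on $\Theta$ are in bijection with the double cosets $H\backslash G/G_A$. Since the natural surjection $H\backslash G \twoheadrightarrow H\backslash G/G_A$ shows that the number of double cosets is at most $|H \backslash G|=|G:H|$, each $G$-orbit splits into at most $|G:H|$ distinct $H$-orbits. Summing this bound over the $s(G)$ orbits of $G$ on $\mathscr{P}(\Omega)$ yields $s(H)\le s(G)\cdot|G:H|$.

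There is no real obstacle here; the only point that needs to be stated cleanly is the double-coset bookkeeping in the second inequality, which is a standard application of the orbit-stabilizer theorem. Everything else is formal.
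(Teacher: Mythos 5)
Your proof is correct. The paper itself states this lemma without proof (it is a standard fact going back to Babai--Pyber), so there is nothing to compare against; your two-part argument --- the refinement of $G$-orbits by $H$-orbits for the left inequality, and the double-coset count $|H\backslash G/G_A|\le |G:H|$ applied orbit-by-orbit for the right inequality --- is exactly the standard justification and is complete.
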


\begin{lemma}  \label{lem2}
Assume $G$ is intransitive on $\Omega$ and has orbits $\Omega_1, \cdots , \Omega_m$. Let $G_i$ be the restriction of $G$ to $\Omega_i$. Then \[s(G) \geq s(G_1) \cdot \cdots \cdot s(G_m).\]
\end{lemma}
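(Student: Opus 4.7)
The plan is to construct a natural surjection from the set of $G$-orbits on $\mathscr{P}(\Omega)$ onto the cartesian product
\[
\prod_{i=1}^m \{\,G_i\text{-orbits on }\mathscr{P}(\Omega_i)\,\},
\]
after which the inequality $s(G) \geq s(G_1)\cdots s(G_m)$ falls out by comparing cardinalities.

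First I would define the map $\Phi$. For $S \subseteq \Omega$, set $S_i = S \cap \Omega_i$, so that $S = S_1 \sqcup \cdots \sqcup S_m$, and put $\Phi([S]_G) = ([S_1]_{G_1},\ldots,[S_m]_{G_m})$. The only point needing verification is well-definedness, and this rests on the fact that each $\Omega_i$ is a $G$-orbit and therefore $G$-invariant. Consequently, for every $g \in G$ the restriction $g|_{\Omega_i}$ lies in $G_i$ by definition, and $g(S_i) = g(S) \cap \Omega_i$; replacing $S$ by $g(S)$ therefore only replaces each $S_i$ by something in its $G_i$-orbit, so $\Phi$ depends only on $[S]_G$.

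For surjectivity, I would start from an arbitrary tuple $(O_1,\ldots,O_m)$ of $G_i$-orbits, choose representatives $T_i \in O_i$, and set $T = T_1 \cup \cdots \cup T_m$. Since the orbits $\Omega_i$ are pairwise disjoint, $T \cap \Omega_i = T_i$ for every $i$, whence $\Phi([T]_G) = (O_1,\ldots,O_m)$. There is essentially no obstacle in this proof: the whole argument reduces to the observation that intersection with $\Omega_i$ commutes with the $G$-action and that different $\Omega_i$ do not interfere with one another. The only subtlety worth flagging is that the inequality need not be an equality, because two tuples of $G_i$-orbits may or may not have representatives that can be simultaneously aligned by a single element of $G$; but this direction is irrelevant for the lemma.
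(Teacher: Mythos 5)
Your proof is correct and is essentially the paper's argument made explicit: the paper embeds $G$ in $G_1 \times \cdots \times G_m$ and invokes Lemma~\ref{lem1} together with the identity $s(G_1 \times \cdots \times G_m) = s(G_1) \cdots s(G_m)$, and your surjection $\Phi$ is precisely the orbit-level map underlying that two-step deduction. There are no gaps; the well-definedness and surjectivity checks are exactly the points that need verifying.
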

\begin{proof}
$G \leq G_1 \times \cdots \times G_m$ so we can apply Lemma ~\ref{lem1}. Clearly $s(G_1 \times \cdots \times G_m)=s(G_1) \cdot \cdots \cdot s(G_m)$.
\end{proof}

\begin{lemma}  \label{induction}
Let $(\Omega_1, \cdots, \Omega_m)$ denote a system of imprimitivity of $G$ with maximal block-size $b$ $(1 \leq b <n; b = 1$ if and only if $G$ is primitive; $bm=n$). Let $H$ denote the normal subgroup of $G$ stabilizing each of the blocks $\Omega_i$. Denote $s=s(G_i)$. Then
\begin{enumerate}
\item $s(G) \geq s^m/|G/H|$.
\item $s(G) \geq {{s+m-1} \choose s-1}$. Also, the equation holds if $G/H \cong S_m$.
\end{enumerate}
\end{lemma}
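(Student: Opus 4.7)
The plan is to derive each bound from the block decomposition together with Lemma~\ref{lem1}.

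\textbf{For (1).} The discussion just before the lemma gives $H \leq G_1 \times \cdots \times G_m < \Sym(\Omega)$. Since the direct product acts independently on each $\Omega_i$, it has
\[
s(G_1 \times \cdots \times G_m) \;=\; \prod_{i=1}^{m} s(G_i) \;=\; s^m
\]
orbits on $2^\Omega$. Applying Lemma~\ref{lem1} to $H \leq G_1 \times \cdots \times G_m$ yields $s(H) \geq s^m$, and applying Lemma~\ref{lem1} again to $H \leq G$ yields $s(H) \leq s(G) \cdot |G:H|$, from which $s(G) \geq s^m / |G/H|$.

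\textbf{For (2).} The strategy is to attach to each subset $S \subseteq \Omega$ its \emph{type vector} $\tau(S) = (o_1, \ldots, o_m) \in \{1, \ldots, s\}^m$, where $o_i$ labels the $G_i$-orbit of $S \cap \Omega_i$. The $m$ coordinates can take values in a common label set of size $s$ because the $G_i$ are permutation equivalent, with the identifications of their orbit sets determined canonically by conjugation inside $G$. A direct check shows that for $g \in G$ whose block-image is $\sigma \in G/H \leq S_m$, the vector $\tau(g \cdot S)$ equals $\tau(S)$ with its coordinates permuted by $\sigma$. Hence $s(G)$ is at least the number of $G/H$-orbits on $\{1,\ldots,s\}^m$, which is at least the number of $S_m$-orbits, equal to $\binom{s+m-1}{s-1}$ (the number of multisets of size $m$ over a set of size $s$).

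For the equation when $G/H \cong S_m$, to obtain the matching upper bound one shows that any two subsets with the same type multiset lie in a single $G$-orbit: first use a lift in $G$ of the needed $\sigma \in S_m \cong G/H$ to align the two type vectors coordinate-wise, then transport one subset onto the other using an element of the block kernel $H$ acting within each block. The sharpness is exhibited by the wreath product $G = G_i \wr S_m$, in which $H = G_i \times \cdots \times G_i$ acts with full force on each block, so the type multiset is a complete invariant of the $G$-orbit.

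\textbf{Main obstacle.} The delicate step is the equality direction in (2). The lower bound $\binom{s+m-1}{s-1}$ is a clean pigeonhole argument. The matching upper bound requires that $H$ act transitively on the subsets of $\Omega$ with any prescribed type vector, which reduces to showing that the restriction map $H \to G_i$ is surjective onto each block. This is automatic in the wreath product case, which is where equality is actually achieved.
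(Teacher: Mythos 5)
Your argument for the inequality in part (2) is essentially the paper's: the paper attaches to each subset $A$ the data of how many of the intersections $A\cap\Omega_i$ lie in each of the $s$ orbit classes of $G_i$, observes this is a $G$-invariant, and counts compositions of $m$ into $s$ nonnegative parts to get $\binom{s+m-1}{s-1}$; your type-vector followed by the $G/H$-action on $\{1,\dots,s\}^m$ is the same invariant phrased slightly more carefully. Note, however, that the paper's printed proof covers \emph{only} that inequality --- it proves neither (1) nor the equality clause --- so your derivation of (1) from $H\le G_1\times\cdots\times G_m$ together with the two inequalities of Lemma~\ref{lem1} is supplying a step the paper omits, and it is correct. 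One caveat on your discussion of equality: surjectivity of each restriction $H\to G_i$ is \emph{not} sufficient. For the Klein four-group acting on $\{1,2,3,4\}$ with blocks $\{1,2\},\{3,4\}$ one has $G/H\cong S_2=S_m$, each projection of $H=\langle(12)(34)\rangle$ onto $G_i\cong S_2$ is surjective, yet $s(G)=7>\binom{4}{2}=6$; indeed this shows the lemma's clause ``the equation holds if $G/H\cong S_m$'' is false as literally stated. What is actually needed is that $H$ act transitively on each fiber of the type map, which holds when $H=G_1\times\cdots\times G_m$, i.e.\ for the full wreath product $G_i\wr S_m$ --- the only case the paper ever invokes (Theorem~\ref{thm3}, with $S_4$). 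So your instinct to pin the equality to the wreath product is right; the sentence reducing it to ``surjective onto each block'' is the one imprecise point.
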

\begin{proof}

Let $A$ be a subset of $\Omega$ and let $\alpha_j(0 \leq j \leq b)$ denote the number of $s$ orbit-element sets among the $A \cap \Omega_i$. Let $B$ be another subset of $\Omega$ with the number $\beta_j$ defined similarly. If $A$ and $B$ are in the same orbit of $G$ then $\alpha_j=\beta_j$ for $(0 \leq j \leq b)$. Therefore $s(G)$ is at least the number of partitions of $m$ into $s$ non-negative integers (where the order of the summands is taken into consideration). It is well known that this number is \[{{s+m-1} \choose s-1}\] which proves (2).
\end{proof}


We need the following estimates of the order of the primitive permutation groups.

\begin{theorem} \label{thm1}
  Let $G$ be a primitive permutation group of degree $n$ where $G$ does not contain $A_n$. Then
  \begin{enumerate}
  \item $|G| < 50  \cdot n^{\sqrt n}$.
  \item $|G| < 3^n$. Moreover, if $n> 24$, then $|G| < 2^n$.
  \item $|G| \leq 2^{0.76 n}$ when $n \geq 25$ and $n \neq 32$.
  \end{enumerate}
\end{theorem}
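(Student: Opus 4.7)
All three estimates are consequences of the classification of primitive permutation groups of large order, most cleanly stated in Mar\'oti's theorem. That result says that if $G$ is primitive of degree $n$ with $A_n\not\le G$, then either (a) $G\le S_m\wr S_r$ in its product action on $k$-subsets with $n=\binom{m}{k}^r$ and $|G|\le (m!)^r r!$; (b) $G\in\{M_{11}, M_{12}, M_{23}, M_{24}\}$ in its natural action; or (c) $|G|\le\prod_{i=0}^{\lfloor\log_2 n\rfloor-1}(n-2^i)\le n^{1+\log_2 n}$.

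My plan is to derive (1)--(3) from this trichotomy, bounding $|G|$ in each of the three cases and handling a bounded range of small degrees directly. For (1), case (c) gives $|G|\le n^{1+\log_2 n}$, which is at most $50\cdot n^{\sqrt n}$ for all sufficiently large $n$ (once $1+\log_2 n\le \sqrt n+\log_n 50$), with the factor $50$ absorbing the remaining small $n$; cases (a) and (b) are short calculations that reduce either to case (c) or to four explicit orders. For (2), the same estimate gives $n^{1+\log_2 n}<3^n$ for every $n\ge 2$, and $n^{1+\log_2 n}<2^n$ once $n$ is large; the Mathieu exceptions all live in degrees at most $24$, so combining this with a short inspection of the surviving degrees from tables of primitive groups yields the stronger bound $|G|<2^n$ for $n>24$. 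For (3), a sharper form of Mar\'oti's inequality yields $|G|\le 2^{0.76 n}$ in case (c) for $n\ge 25$, and case (a) is handled by induction on the wreath tower; the affine group $\mathrm{AGL}(5,2)$ of degree $32$ is the unique primitive group of degree $\ge 25$ violating the improved bound, which accounts for the exclusion $n\neq 32$.

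The main obstacle is the tight constant $0.76$ in part (3): the asymptotic estimate from Mar\'oti's theorem is not enough by itself, so one must identify $\mathrm{AGL}(5,2)$ as the sole outlier explicitly and verify from lists of primitive groups of moderate degree that no other exception arises in the window where $n^{\log_2 n}$ has not yet dropped safely below $2^{0.76 n}$. Parts (1) and (2) are routine once the classification is invoked, with only a short finite check at the boundary between the asymptotic regime and the small-degree cases.
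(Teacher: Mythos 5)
Your proposal is correct and follows essentially the same route as the paper: both parts (1) and (2) are exactly Mar\'oti's published corollaries (the paper simply cites them rather than re-deriving them from the trichotomy), and part (3) is obtained in both cases by combining the asymptotic bound with a finite check of primitive groups of moderate degree, with degree $32$ (the group $\ASL(5,2)$) as the lone exception. The only cosmetic difference is that the paper deduces (3) from (1) for $n\geq 89$ and verifies $25\leq n\leq 88$ by GAP, whereas you propose to organize the same finite verification around Mar\'oti's case analysis.
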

\begin{proof}
(1) is ~\cite[Corollary 1.1(ii)]{Maroti} and (2) is ~\cite[Corollary 1.2]{Maroti}.

(3) follows from (1) for $n \geq 89$, and one may check the results using GAP \cite{GAP} for $25 \leq n \leq 88$.

\end{proof}

\begin{lemma} \label{lem3}
  Let $G$ be a primitive permutation group of degree $n$ where $G$ does not contain any alternating group $\Alt(k), k>4$ as a composition factor.
  \begin{enumerate}
   \item $s(G)\geq 35$ when $n=14$.
   \item $s(G)\geq 46$ when $n=15$.
   \item $s(G)\geq 32$ when $n=16$.
   \item $s(G)\geq 48$ when $n=17$.
   \item $s(G)\geq 158$ when $n=21$.
   \item $s(G)\geq 105$ when $n=22$.
   \item $s(G)\geq 72$ when $n=23$.
   \item $s(G)\geq 49$ when $n=24$.
   \item $s(G)\geq 361$ when $n=32$.

  \end{enumerate}
\end{lemma}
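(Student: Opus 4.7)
The plan is to reduce each clause of the lemma to a finite computer verification, since the degrees listed are all small enough that the primitive permutation groups are completely classified and stored in GAP's \texttt{PrimitiveGroups} library. I would proceed in three stages.

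First, for each $n\in\{14,15,16,17,21,22,23,24,32\}$ I would enumerate the primitive groups of degree $n$ and discard any whose composition factors include some $\Alt(k)$ with $k>4$. By the O'Nan--Scott theorem, what survives is a short list of affine-type groups (whose socle is an elementary abelian $p$-group and hence contributes no alternating composition factor) together with a handful of almost simple groups whose socle is drawn from a small catalogue: $\PSL(2,p)$ for suitable $p$, the Mathieu groups $M_{11}, M_{12}, M_{22}, M_{23}, M_{24}$, and a few isolated examples. For some of the listed degrees only one or two admissible groups exist at all, so the filtering is trivial.

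Second, for each admissible $G$ of degree $n$ I would compute $s(G)$ from the Cauchy--Frobenius (Burnside) formula applied to the induced action on $\mathscr{P}(\Omega)$:
\[
s(G) \;=\; \frac{1}{|G|}\sum_{g\in G} 2^{c(g)},
\]
where $c(g)$ is the number of cycles of $g$ on $\Omega$, fixed points included. The sum collapses to one contribution per conjugacy class, and the cycle structures on $\Omega$ are directly available inside GAP, so each evaluation is essentially instantaneous. One then takes the minimum of $s(G)$ over the admissible primitive $G$ of degree $n$ and checks that it matches the bound in the corresponding clause.

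The main obstacle is organizational rather than mathematical, and it is most visible at $n=32$: this degree has by far the most admissible primitive candidates, since it accommodates many affine subgroups of $\AGL(5,2)$ in addition to almost simple groups such as $\PSL(2,31)$. However, $|\AGL(5,2)|$ is modest and a single conjugacy-class sum disposes of each candidate in milliseconds. Identifying the extremal example in each clause is then immediate from the recorded data; for instance the bound $49$ at $n=24$ is attained by $M_{24}$, consistent with the role of $M_{24}$ in the extremal wreath tower described in the introduction.
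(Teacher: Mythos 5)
Your proposal is correct and follows the same basic strategy as the paper, whose entire proof is the statement that ``the results are checked by GAP'' together with a list of the extremal groups; in both cases the substance is a finite verification over the classified primitive groups of the listed degrees. The one genuine methodological difference is your use of the Cauchy--Frobenius count $s(G)=\frac{1}{|G|}\sum_{g\in G}2^{c(g)}$, evaluated class by class, and this is actually an improvement: the paper evidently computed orbits on $\mathscr{P}(\Omega)$ directly, which breaks down at $n=32$, where it can only report that $s(G)\geq 361$ for $G\cong\ASL(5,2)$ as ``a lower bound estimated by GAP using random search''; your method yields the exact value there with no extra effort, since only the conjugacy classes and their cycle types on $32$ points are needed. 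One caution on your filtering step: the paper names $\PSL(4,2)$ (degree $15$) and $2^4.\PSL(4,2)$ (degree $16$) as the groups attaining the bounds $46$ and $32$, but $\PSL(4,2)\cong\Alt(8)$, so a filter that literally discards every group having a composition factor $\Alt(k)$ with $k>4$ will exclude these candidates, and your computed minima at $n=15,16$ may therefore exceed the stated constants rather than match them. This does not affect correctness --- each clause is a lower bound, and a larger minimum only strengthens it and every later use of the lemma --- but you should expect ``at least'' rather than exact agreement when comparing your output to the stated values.
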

\begin{proof}
The results are checked by GAP ~\cite{GAP}.
\item $n=14$, the bound $35$ is attained by PrimitiveGroup(14,2) $\cong \PGL(2,13)$.
\item $n=15$, the bound $46$ is attained by PrimitiveGroup(15,4) $\cong \PSL(4,2)$.
\item $n=16$, the bound $32$ is attained by PrimitiveGroup(16,11) $\cong 2^4.\PSL(4,2)$.
\item $n=17$, the bound $48$ is attained by PrimitiveGroup(17,8)  $\cong \PGaL(2,2^4)$.
\item $n=21$, the bound $158$ is attained by PrimitiveGroup(21,7) $\cong \PGaL(3,4)$.
\item $n=22$. There are only two groups. If $G \cong M_{22}$, then $s(G)=130$. If $G \cong M_{22}.2$, then $s(G)=105$.
\item $n=23$, the group with the second largest order has order $506$ and $s(G) \geq 16578$. The group with the largest order is $G \cong M_{23}$, and $s(G)=72$.
\item $n=24$, the group with the second largest order has order $12144$ and $s(G) \geq 1382$.  The group with the largest order is $G \cong M_{24}$, and $s(G)=49$.
\item $n=32$, the group with the second largest order has order $29760$ and $s(G) \geq 144321$.  The group with the largest order is $G \cong $ PrimitiveGroup(32,3) $\cong \ASL(5,2)$, and $s(G) \geq 361$. We remark here that $361$ is a lower bound estimated by GAP ~\cite{GAP} using random search, we cannot get the exact value though.
\end{proof}

\section{Main Theorems} \label{sec:maintheorem}
We define a sequence $\{s_k\}_{k \geq 0}$ where $s_{-1}=s(M_{24})=49$, $s_0=s(M_{24} \wr M_{12})=$ and $s_{k+1}={{s_{k}+3} \choose 4}$ for $k \geq 0$.

Clearly the sequence $\{s_k\}_{k \geq 0}$ is strictly increasing.

We define a sequence $\{a_k\}_{k \geq 0}$ where $a_k=\frac {\log_2{s_k}} {24 \cdot 12 \cdot 4^k}$.

\[a_{k+1}=\frac {\log_2{{{s_{k}+3} \choose 4}}} {24 \cdot 12 \cdot 4^{k+1}}=\frac {\log_2 (\frac {(s_{k}+3)(s_{k}+2)(s_{k}+1)(s_{k})} {24})} {24 \cdot 12 \cdot 4^{k+1}} < \frac {\log_2 (s_k)^4} {24 \cdot 12 \cdot 4^{k+1}}=a_k.\]

Thus we know that the sequence $\{a_k\}_{k \geq 0}$ is strictly decreasing. Since $a_k>0$, $\lim_{k \mapsto \infty} a_k$ exists.

In order to provide a good estimate of the value of  $\lim_{k \mapsto \infty} a_k$, one needs to calculate the exact value of $s(M_{24} \wr M_{12})$. Using GAP ~\cite{GAP}, one may easily obtain that $s(M_{24})=49$. On the other hand, it is not easy to calculate the exact value of $s(M_{24} \wr M_{12})$ using GAP ~\cite{GAP} directly due to the calculation complexity of the orbit-stabilizer algorithm.

In order to calculate $s(M_{24} \wr M_{12})$, we need to break down the calculation into a few manageable steps. We first fix some notation. Let $\Pi$ be the set of all the composition of $12$ and let $\pi$ be a partition of $12$. Here we use $B(\pi)$ to denote the number of the blocks of the partition. We know that $B(\pi)=n_1 +n_2 \cdots +n_k$, where $n_1$ is the number of blocks of the largest size, $n_2$ is the number of blocks of the second largest size and so on, and we define $F(\pi)=n_1! \cdot n_2! \cdots n_k!$.

We define $N(\pi)$ to be the number of orbits of $M_{12}$ on all the multiset permutations (permutations with repetition) of a set of $12$ elements with the partition $\pi$ as multiset structure.



One can read the value of $N(\pi)$ from Table 1 and Table 2. The results in Table 1 and Table 2 are obtained using computer program GAP ~\cite{GAP}.

For example, for the partition $\pi=(3,3,2,1,1,1,1)$, we have that $B(\pi)=7$, $F(\pi)=2! \cdot 4!$ and $N(\pi)=70$.

$s(M_{24} \wr M_{12})$ can be calculated using the following formula.

\[s(M_{24} \wr M_{12})=\sum_{\pi \in \Pi} N(\pi) \cdot \frac {_{49} P_{B(\pi)}}{F(\pi)}\]

Using the previous formula and Table $1, 2$, we obtain that $s_0=s(M_{24} \wr M_{12}) = 2,017,737,434,447,329$.

By some calculations, one gets the following. $a_0 \approx 0.1765335412289444$, $a_1 \approx  0.172553539058179$ and $a_2 \approx  0.171558538515488$. 

\begin{lemma} \label{lem4}
  Let $G$ be a primitive permutation group of degree $n$ where $G$ does not contain any alternating group $\Alt(k), k>4$ as a composition factor. Then $rs(G) \geq a_2$.
\end{lemma}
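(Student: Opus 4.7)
The plan is to partition the primitive groups $G$ by degree and establish the bound $\log_2 s(G) \geq a_2 \cdot n$ in each range. Throughout I would exploit the elementary orbit-counting inequality $s(G) \geq 2^n/|G|$, which holds because $G$ acts on the $2^n$ subsets of $\Omega$ and each orbit has size at most $|G|$, so $rs(G) \geq 1 - (\log_2 |G|)/n$.

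First, for the asymptotic range $n \geq 25$ with $n \neq 32$, Theorem~\ref{thm1}(3) yields $|G| \leq 2^{0.76 n}$, so $rs(G) \geq 1 - 0.76 = 0.24$, which comfortably exceeds $a_2 \approx 0.1716$. The leftover degree $n = 32$ is dispatched by Lemma~\ref{lem3}(9), which gives $s(G) \geq 361$ and hence $rs(G) \geq (\log_2 361)/32 > 0.265 > a_2$.

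Next, for the small degrees $n \in \{14, 15, 16, 17, 21, 22, 23, 24\}$ Lemma~\ref{lem3} already supplies explicit lower bounds on $s(G)$, and a short arithmetic check verifies the inequality in each case (for instance at $n = 24$ one needs $s(G) \geq 2^{a_2 \cdot 24} \approx 17.4$, while the lemma guarantees $49$). For the remaining degrees $n \in \{2, \ldots, 13\} \cup \{18, 19, 20\}$, I would enumerate via GAP~\cite{GAP} all primitive groups of that degree with no composition factor $\Alt(k)$ for $k > 4$---a very short list in each case---and compute $s(G)$ directly to check the inequality.

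The principal obstacle is not theoretical but bookkeeping: making sure the small-degree case analysis is genuinely exhaustive, and confirming that the bounds of Lemma~\ref{lem3}, which are stated only to the precision the argument needs, really do exceed $2^{a_2 n}$. The degree $n = 32$ case is the most delicate, since the lower bound $s(G) \geq 361$ for $G \cong \ASL(5,2)$ was obtained by random search rather than exact enumeration, so the resulting $rs(G)$, while larger than $a_2$, leaves a comparatively thin margin.
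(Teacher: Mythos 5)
Your proof is correct, and for the two substantive ranges it coincides with the paper's: for $n\ge 25$, $n\ne 32$ you use $s(G)\ge 2^n/|G|\ge 2^{0.24n}$ via Theorem~\ref{thm1}(3), and for $n=32$ you invoke Lemma~\ref{lem3}(9), exactly as the paper does. Where you diverge is in the range $n\le 24$: you propose to pull in the explicit values from Lemma~\ref{lem3} for $n\in\{14,\dots,17,21,\dots,24\}$ and to run a fresh GAP~\cite{GAP} enumeration for the remaining degrees. The paper dispatches all of $n\le 24$ in one line with the trivial bound $s(G)\ge n+1$ (subsets of different cardinalities lie in different orbits), which gives $rs(G)\ge \log_2(n+1)/n\ge \log_2(25)/24\approx 0.193>a_2$, since $\log_2(x+1)/x$ is decreasing. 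So none of the computational data is actually needed for this lemma --- Lemma~\ref{lem3} earns its keep only later, in Proposition~\ref{prop1}, where the stronger inequality $\star$ (which subtracts a $\log_2(\alpha)/m$ term) must be verified and the trivial bound no longer suffices. Your approach buys nothing here except extra bookkeeping risk, which you yourself flag as the principal obstacle; the trivial cardinality bound eliminates that risk entirely. One small correction to your closing remark: the $n=32$ margin is not thin --- $\log_2(361)/32\approx 0.266$ sits well above $a_2\approx 0.1716$, and the only delicacy is that $361$ is a randomized lower bound rather than an exact count, which is harmless for a one-sided inequality.
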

\begin{proof}
Suppose that $n \geq 25$ and $n \neq 32$, $s(G) \geq 2^{0.24 n}$ by Theorem ~\ref{thm1}(3) and $rs(G) \geq 0.24 > a_2$.

Suppose that $n \leq 24$, then $s(G) \geq n+1$ and $rs(G) > 0.199 > a_2$.

Suppose that $n =32$, then $s(G) \geq 361$ by Lemma ~\ref{lem3}(9) and $rs(G) > 0.26 > a_2$.
\end{proof}

Next we prove a general induction theorem.
\begin{theorem} \label{thm2}
  Let $G$ be a permutation group of degree $n$ induced from $H$ where $H$ is a permutation group of degree $m$. Let $\alpha=24^{1/3}$. Suppose that \[\frac {\log_2 s(H)} m - \frac {\log_2(\alpha)} m \geq \beta, \] then $\frac {\log_2 s(G)} n \geq \beta$.
\end{theorem}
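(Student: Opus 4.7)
The plan is to argue by induction on the depth of the iterated wreath product realizing $G$ as being induced from $H$, namely $G \lesssim H \wr P_j \wr \cdots \wr P_1$ with each $P_i$ primitive. To make the induction close without leaking at each step, I would strengthen the target to the invariant
\[
\log_2 s(G') \geq \beta \cdot \deg(G') + \log_2 \alpha
\]
at every intermediate stage $G'$ of the tower. The base case $G' = H$ is exactly the hypothesis $\log_2 s(H)/m - \log_2 \alpha / m \geq \beta$, and the final case $G' = G$ immediately implies the desired conclusion $\log_2 s(G)/n \geq \beta$.

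For the inductive step, I peel off the outermost primitive layer, writing $G' = W \wr P$ where $W$ has degree $\ell$ and $P$ is primitive of degree $m'$. Lemma~\ref{induction}(1) gives $s(G') \geq s(W)^{m'}/|P|$, so after applying the inductive hypothesis to $W$ and taking logarithms,
\[
\log_2 s(G') \geq m'(\beta \ell + \log_2 \alpha) - \log_2 |P| = \beta \cdot \deg(G') + \log_2 \alpha + \bigl[(m'-1)\log_2 \alpha - \log_2 |P|\bigr].
\]
Preserving the invariant therefore reduces to the single key inequality $|P| \leq \alpha^{m'-1}$, equivalently $|P|^3 \leq 24^{m'-1}$, for each primitive layer $P$.

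The main obstacle, and the source of the specific constant $\alpha = 24^{1/3}$, is verifying this key inequality. It is tight when $m' = 4$ and $P = S_4$, which is exactly why the extremal tower from the introduction grows by $S_4$'s. The inequality is false in general (for instance $|S_5| = 120 > \alpha^4 \approx 66.4$), but this is where the background assumption that $G$ has no $\Alt(k)$, $k > 4$, as a composition factor intervenes: it propagates to every $P_i$ and rules out $S_{m'}$ together with the other problematic primitives. For $m' \geq 25$ with $m' \neq 32$, Theorem~\ref{thm1}(3) yields $|P| \leq 2^{0.76 m'}$, and the elementary estimate $0.76 m' \leq (m'-1)(\log_2 24)/3$ (valid for $m' \geq 2$) gives the required bound. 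The finitely many remaining cases $2 \leq m' \leq 24$ and $m' = 32$ are checked by direct inspection of the primitive groups of each degree; once $\Alt$- and $\Sym$-type groups are excluded, the surviving primitives are small enough for the inequality to hold, with $S_4$ on $4$ points being the unique equality case.
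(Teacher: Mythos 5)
Your proof is correct and follows essentially the same route as the paper: induction on the wreath-product tower, maintaining the invariant $s(G') \geq \alpha\, 2^{\beta \deg (G')}$ via Lemma~\ref{induction}(1) and the key bound $|P| \leq \alpha^{\deg(P) - 1}$ for each primitive layer $P$. You are in fact more careful than the paper on the one delicate point: the paper justifies $|P_1| \leq \alpha^{k-1}$ by citing only Theorem~\ref{thm1}(1), whereas, as you observe, this bound genuinely requires the standing composition-factor restriction (it fails for $S_5$) together with an inspection of the small-degree primitive groups, with $S_4$ as the equality case.
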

\begin{proof}
We may assume that $G \lesssim H \wr P_1 \cdots \wr P_j$, where $P_1$ is a primitive permutation group of degree $k$ and all the $P_i$s are primitive permutation groups. Then we know that $|P_1| \leq \alpha ^{k-1}$ by Theorem ~\ref{thm1}(1).

Since \[\frac {\log_2 s(H)} m - \frac {\log_2(\alpha)} m \geq \beta, \] we have that $s(H) \geq \alpha 2^{m \beta}$.

By Lemma ~\ref{induction}(1) we have $s(H \wr P_1) \geq s(H)^k/|P_1| \geq \alpha^k 2^{mk \beta}/\alpha^{k-1}=\alpha 2^{mk \beta}$.
Thus we have
 \[\frac {\log_2 s(H \wr P_1)} {mk} - \frac {\log_2(\alpha)} {mk} \geq \beta.\]
Now the result follows by induction.
\end{proof}

\begin{prop} \label{prop1}
  Let $G$ be a permutation group of degree $n$ where $G$ does not contain any alternating group $\Alt(k), k>4$ as a composition factor. Let $G$ be induced from $H$ where $H$ be a primitive permutation group of degree $m$. If $H \not\cong M_{24}$, then $\frac {\log_2 s(G)} n \geq a_2$.
\end{prop}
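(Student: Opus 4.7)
The plan is to apply Theorem \ref{thm2} with $\beta = a_2$. By that theorem it suffices to show that every primitive permutation group $H$ of degree $m$ satisfying the hypotheses (no $\Alt(k)$ with $k>4$ as composition factor, and $H \not\cong M_{24}$) satisfies
\[
s(H) \;\geq\; \alpha \cdot 2^{m a_2},
\]
where $\alpha = 24^{1/3}$; equivalently $\frac{\log_2 s(H) - \log_2 \alpha}{m} \geq a_2$. Using $a_2 \approx 0.1716$ and $\log_2 \alpha \approx 1.528$, this reduces to a finite collection of explicit numerical inequalities indexed by $m$.

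I would then split by the degree $m$. For $m \geq 25$ with $m \neq 32$, Theorem \ref{thm1}(3) gives $|H| \leq 2^{0.76 m}$, so $s(H) \geq 2^m/|H| \geq 2^{0.24 m}$, and the target reduces to $(0.24 - a_2) m \geq \log_2 \alpha$, which holds comfortably since $0.24 - a_2 > 0.068$. For the exceptional $m = 32$, Lemma \ref{lem3}(9) gives $s(H) \geq 361$, which easily exceeds $\alpha \cdot 2^{32 a_2} \approx 130$. For $14 \leq m \leq 24$ (with $H \neq M_{24}$ when $m = 24$), Lemma \ref{lem3}(1)--(8) supplies explicit lower bounds for $s(H)$ that exceed the target $\alpha \cdot 2^{m a_2}$ by a routine numerical check.

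The remaining degrees $m \in \{4,5,\ldots,13,18,19,20\}$ are not covered by Lemma \ref{lem3}, but here the list of primitive groups without large alternating composition factors is short and can be enumerated from standard tables, with each case verified directly (using GAP where convenient). The single tight case is $H = M_{12}$ at $m = 12$, where $s(M_{12}) = 14$ barely exceeds $\alpha \cdot 2^{12 a_2} \approx 12.04$. This tightness is precisely what forces the target to be $a_2$ (rather than $a_0$ or $a_1$) in Theorem \ref{thm2}, since the tower $M_{24} \wr M_{12} \wr S_4 \wr \cdots$ approaches $\lim a_k$ monotonically from above.

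The main obstacle is the very small degrees $m \in \{2,3\}$, where $H \in \{S_2, A_3, S_3\}$ has $s(H) \in \{3,4\}$ and the target inequality fails. In these cases Theorem \ref{thm2} cannot be applied to $H$ directly; instead, writing $G \lesssim H \wr P_1 \wr \cdots \wr P_j$ with all $P_i$ primitive, I would absorb the first imprimitivity layer into the base by setting $H' = H \wr P_1$ of degree $m m_1$ and applying Theorem \ref{thm2} to $G \lesssim H' \wr P_2 \wr \cdots \wr P_j$. Since $P_1$ also has no large alternating composition factor, and $s(H') \geq s(H)^{m_1}/|P_1|$ by Lemma \ref{induction}(1), verifying $s(H') \geq \alpha \cdot 2^{m m_1 a_2}$ reduces to a further case analysis on $P_1$, closed off by the same small-degree bounds invoked in the previous paragraph.
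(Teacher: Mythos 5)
Your overall strategy is the paper's: reduce to the inequality $s(H)\geq\alpha\cdot 2^{ma_2}$ via Theorem~\ref{thm2}, dispose of $m\geq 25$ by Theorem~\ref{thm1}(3), of $14\leq m\leq 24$ and $m=32$ by Lemma~\ref{lem3}, of the middle degrees by order bounds or the trivial bound $s(H)\geq m+1$, and then treat $m\in\{2,3\}$ separately by absorbing the first wreath layer into $K=H\wr P_1$ and verifying the analogous inequality for $K$. That last step is also exactly what the paper does, so the architecture is sound.

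There is, however, one concrete soft spot: for $m\in\{2,3\}$ you propose to close the case analysis on $P_1$ using only Lemma~\ref{induction}(1), i.e.\ $s(K)\geq s(H)^{m_1}/|P_1|$. This bound is genuinely too weak when $P_1$ is a small-degree primitive group whose order is large compared with $s(H)^{m_1}$. For instance, with $m=2$, $s(H)=3$ and $P_1\cong M_{12}$ ($m_1=12$, $|P_1|=95040$), Lemma~\ref{induction}(1) only gives $s(K)\geq 3^{12}/95040\approx 5.6$, while the target is $\alpha\cdot 2^{24a_2}\approx 50$; similarly for $m=3$ and $P_1\cong M_{12}$ one gets $s(K)\geq 4^{12}/95040\approx 177$ against a target of about $208$. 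The paper avoids this by switching to Lemma~\ref{induction}(2), which gives the partition-counting bound $s(K)\geq\binom{s(H)+m_1-1}{s(H)-1}$ (e.g.\ $\binom{14}{2}=91$ and $\binom{15}{3}=455$ in the two examples above) for all $m_1$ up to $17$ resp.\ $16$, and reserves the $s(H)^{m_1}/|P_1|$ bound for larger $m_1$, where $|P_1|$ is controlled by Theorem~\ref{thm1}(3) or Table~3. Your proof needs the same switch; as written, the $P_1\cong M_{12}$ (and $M_{11}$, etc.) subcases do not go through. A further minor quibble: your claim that $H=M_{12}$ at $m=12$ is the tight case is not really what forces the threshold $a_2$ --- the trivial bound $s(H)\geq 13$ already suffices there; $a_2$ is simply a convenient value exceeding $\lim_k a_k$ that every non-tower configuration clears.
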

\begin{proof}
In view of Theorem ~\ref{thm2}, it suffices to check that \[\frac {\log_2 s(H)} m - \frac {\log_2(\alpha)} m \geq a_2.\] We call this inequality $\star$.

Suppose that $m \geq 25$ and $m \neq 32$, then $|H| \leq 2^{0.76 m}$ by Theorem ~\ref{thm1}(2). Since $s(H) \geq 2^m/|H|$, we have $\frac {\log_2 s(H)}  m \geq 0.24$. It is easy to check that $\star$ is satisfied.

Suppose that $m=32$, then $s(H) \geq 361$ by Lemma ~\ref{lem3}(9), it is easy to check that $\star$ is satisfied.

Suppose that $m=24$ and $H \not\cong M_{24}$, then $s(H) \geq 1382$ by Lemma ~\ref{lem3}(8), it is easy to check that $\star$ is satisfied.

Suppose that $m=23$, then $s(H) \geq 72$ by Lemma ~\ref{lem3}(7), it is easy to check that $\star$ is satisfied.

Suppose that $m=22$, then $s(H) \geq 105$ by Lemma ~\ref{lem3}(6), it is easy to check that $\star$ is satisfied.

Suppose that $m=21$, then $s(H) \geq 158$ by Lemma ~\ref{lem3}(5), it is easy to check that $\star$ is satisfied.

Suppose that $m=20$, then $|H| \leq 6840$ by Table 3 and $s(H) \geq 2^{20}/6840$, it is easy to check that $\star$ is satisfied.

Suppose that $m=19$, then $|H| \leq 342$ by Table 3 and $s(H) \geq 2^{19}/342$, it is easy to check that $\star$ is satisfied.

Suppose that $m=18$, then $|H| \leq 4896$ by Table 3 and $s(H) \geq 2^{18}/4896$, it is easy to check that $\star$ is satisfied.

Suppose that $m=17$, then $s(H) \geq 48$ by Lemma ~\ref{lem3}(4), it is easy to check that $\star$ is satisfied.

Suppose that $m=16$, then $s(H) \geq 32$ by Lemma ~\ref{lem3}(3), it is easy to check that $\star$ is satisfied.

Suppose that $m=15$, then $s(H) \geq 46$ by Lemma ~\ref{lem3}(2), it is easy to check that $\star$ is satisfied.

Suppose that $m=14$, then $s(H) \geq 35$ by Lemma ~\ref{lem3}(1), it is easy to check that $\star$ is satisfied.

Suppose that $m=13$, then $s(H) \geq 14$. It is easy to check that $\star$ is satisfied.

Suppose that $m=12$, then $s(H) \geq 13$. It is easy to check that $\star$ is satisfied.

Suppose that $m=11$, then $s(H) \geq 12$. It is easy to check that $\star$ is satisfied.

Suppose that $m=10$, then $s(H) \geq 11$. It is easy to check that $\star$ is satisfied.

Suppose that $m=9$, then $s(H) \geq 10$. It is easy to check that $\star$ is satisfied.

Suppose that $m=8$, then $s(H) \geq 9$. It is easy to check that $\star$ is satisfied.

Suppose that $m=7$, then $s(H) \geq 8$. It is easy to check that $\star$ is satisfied.

Suppose that $m=5$, then $s(H) \geq 6$. It is easy to check that $\star$ is satisfied.

Suppose that $m=4$, then $s(H) \geq 5$. It is easy to check that $\star$ is satisfied.

By Lemma ~\ref{lem4}, we may assume that $G \lesssim H \wr P_1 \cdots \wr P_j$ where $\deg(P_1)=m_1>0$. Let $K=H \wr P_1$.

In view of Theorem ~\ref{thm2}, it suffices to check that \[\frac {\log_2 s(K)} {m m_1} - \frac {\log_2(\alpha)} {m m_1} \geq a_2.\] We call this inequality $\star (K)$.

Suppose that $m=3$, then $s(H) \geq 4$.

$s(K)=s(H)^{m_1}/|P_1|=4^{m_1}/|P_1|$ by Lemma ~\ref{induction}(1). If $m_1 \geq 25$ and $m_1 \neq 32$, then $|P_1| \leq 2^{0.76 m_1}$ by Theorem ~\ref{thm1}(3) and it is easy to check that $\star(K)$ is satisfied.

If $17 \leq m_1 \leq 24$ or $m_1=32$, we use the results in Table 3 to estimate $|P_1|$ and it is easy to check that $\star(K)$ is satisfied.

If $m_1 \leq 16$, then we have that $s(K) \geq {{3+m_1} \choose 3}$ by Lemma ~\ref{induction}(2) and it is easy to check that $\star(K)$ is satisfied.

Suppose that $m=2$, then $s(H) \geq 3$.

$s(K)=s(H)^{m_1}/|P_1|=3^{m_1}/|P_1|$ by Lemma ~\ref{induction}(1). If $m_1 \geq 25$ and $m_1 \neq 32$, then $|P_1| \leq 2^{0.76 m_1}$ by Theorem ~\ref{thm1}(3) and it is easy to check that $\star(K)$ is satisfied.

If $18 \leq m_1 \leq 24$ or $m_1=32$, we use the results in Table 3 to estimate $|P_1|$ and it is easy to check that $\star(K)$ is satisfied.

If $m_1 \leq 17$, then we have that $s(K) \geq {{2+m_1} \choose 2}$ by Lemma ~\ref{induction}(2) and it is easy to check that $\star(K)$ is satisfied.
\end{proof}

\begin{prop} \label{prop2}
  Let $G$ be a permutation group of degree $n$ where $G$ does not contain any alternating group $\Alt(k), k>4$ as a composition factor. Let $G$ be induced from $H$ where $H$ be a primitive permutation group of degree $24$ and $H \cong M_{24}$. Thus $G \lesssim H \wr P_1 \cdots \wr P_j$. If $P_1 \not \cong M_{12}$, then $\frac {\log_2 s(G)} n \geq a_2$.
\end{prop}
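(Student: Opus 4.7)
The plan is to imitate the strategy of Proposition~\ref{prop1}: set $K = H \wr P_1 = M_{24} \wr P_1$ (of degree $24 m_1$ where $m_1 = \deg P_1 \geq 2$), observe that $G$ is induced from $K$, and apply Theorem~\ref{thm2}. Thus it suffices to verify the inequality
\[ \star(K): \quad \frac{\log_2 s(K)}{24 m_1} - \frac{\log_2 \alpha}{24 m_1} \geq a_2. \]
I would bound $s(K)$ using the two estimates from Lemma~\ref{induction}, namely $s(K) \geq 49^{m_1}/|P_1|$ (call this (i)) and $s(K) \geq \binom{49+m_1-1}{48}$ (call this (ii)), the latter being an equality when $P_1 \cong S_{m_1}$.

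The argument then splits on $m_1$. For $m_1 \geq 25$ with $m_1 \neq 32$, Theorem~\ref{thm1}(3) gives $|P_1| \leq 2^{0.76 m_1}$, so $s(K) \geq (49 \cdot 2^{-0.76})^{m_1}$ and $\star(K)$ is comfortable. For $m_1 \in \{13,14,\ldots,24\} \cup \{32\}$ the cases go through exactly as in Proposition~\ref{prop1}: bound $|P_1|$ using Lemma~\ref{lem3} and Table 3, then check $\star(K)$ directly. For small $m_1 \leq 11$ the restriction on composition factors already limits the admissible primitive groups severely (no primitive group of degree $6$ is admissible at all, and in degrees $5, 7, \ldots, 11$ the orders are small), so either (i) or the binomial bound (ii) comfortably suffices.

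The decisive case is $m_1 = 12$ with $P_1 \not\cong M_{12}$. The primitive permutation groups of degree $12$ without large alternating composition factors are $M_{11}$, $\PSL(2,11)$, $\PGL(2,11)$, and $M_{12}$; once $M_{12}$ is removed the maximum order that remains is $|M_{11}| = 7920$. Bound (i) then yields
\[ \log_2 s(K) \;\geq\; 12 \log_2 49 - \log_2 7920 \;>\; 54.4, \]
so $(\log_2 s(K) - \log_2 \alpha)/288 > 0.184 > a_2 \approx 0.1716$, confirming $\star(K)$. This is exactly where the hypothesis is used: substituting $P_1 = M_{12}$ into (i) returns only $a_0 \approx 0.1766$, which is itself less than $a_2$ only by a tiny margin, reflecting the fact that $M_{24} \wr M_{12}$ is the start of the extremal tower.

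The main obstacle is therefore the case $m_1 = 12$ (which is the whole motivation for the hypothesis $P_1 \not\cong M_{12}$), together with the very small $m_1 \in \{2,3,4\}$ where $\star(K)$ holds with only a thin margin and one must invoke the sharper binomial bound (ii) in place of the cruder $49^{m_1}/|P_1|$.
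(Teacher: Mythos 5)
Your proposal follows essentially the same route as the paper's proof: reduce to verifying $\star(K)$ for $K=M_{24}\wr P_1$ via Theorem~\ref{thm2}, then bound $s(K)$ by $49^{m_1}/|P_1|$ (with $|P_1|$ estimated from Table 3 or Theorem~\ref{thm1}(3)) for larger $m_1$ and by $\binom{48+m_1}{48}$ for $m_1\leq 4$, the second-largest order $7920$ at degree $12$ being exactly where the hypothesis $P_1\not\cong M_{12}$ enters. One small slip in your aside: for $P_1=M_{12}$ the quantity that falls just short of $a_2$ is $\frac{\log_2 s(K)-\log_2\alpha}{288}\approx 0.1712$, not $a_0\approx 0.1766$ itself, which is of course larger than $a_2$.
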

\begin{proof}
Clearly $s(H)=49$ by Lemma ~\ref{lem3}(8). Let $K=H \wr P_1$ and $m_1=\deg(P_1)$. In view of Theorem ~\ref{thm2}, it suffices to check that \[\frac {\log_2 s(K)} {24 m_1} - \frac {\log_2(\alpha)} {24 m_1} \geq a_2.\]

If $32 \geq m_1 \geq 5$, then $s(K) \geq 49^{m_1}/|P_1|$ by Lemma ~\ref{induction}(1). We may estimate $|P_1|$ using Table 3, and it is easy to check that the previous inequality is satisfied.

If $m_1 \geq 33$, then $s(K) \geq 49^{m_1}/|P_1|$ by Lemma ~\ref{induction}(1). We may estimate $|P_1|$ using Theorem ~\ref{thm1}(3), and it is easy to check that the previous inequality is satisfied.

If $m_1 \leq 4$, then $s(K) \geq {{48+m_1} \choose 48}$ by Lemma ~\ref{induction}(2), and it is easy to check that the previous inequality is satisfied.
\end{proof}





\begin{theorem} \label{thm3}
  Let $H$ be a permutation group of degree $24 \cdot 12 \cdot 4^k$ where $k \geq 0$ and $H \cong M_{24} \wr M_{12} \wr S_4 \wr \cdots \wr S_4$. Then $rs (H \wr S_4 \wr S_4 \wr S_4 \wr S_4) \leq rs(H \wr P_1 \cdots \wr P_j)$ if $\deg (P_1) \neq 4$.
\end{theorem}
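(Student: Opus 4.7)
The plan is to follow the template of Propositions~\ref{prop1} and \ref{prop2}: reduce to a one-layer condition via Theorem~\ref{thm2}, and then verify that condition by a case analysis on $m_1 = \deg(P_1)$. Set $m = 24\cdot 12\cdot 4^k$ and $K = H \wr P_1$, so any $H\wr P_1\wr P_2\wr\cdots\wr P_j$ is induced from $K$. By Theorem~\ref{thm2} with $\beta = a_{k+4}$, it suffices to show
\[
\frac{\log_2 s(K)}{m\,m_1}-\frac{\log_2\alpha}{m\,m_1}\ \geq\ a_{k+4}.
\]

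To exploit this, I would first quantify $a_k - a_{k+4}$ directly from the recursion $s_{j+1}=\binom{s_j+3}{4}$. Writing $\log_2\binom{s+3}{4} = 4\log_2 s - \log_2 24 + O(1/s)$ and iterating four times gives
\[
\log_2 s_{k+4}\ =\ 256\log_2 s_k - 85\log_2 24 + \varepsilon_k,
\]
where $\varepsilon_k>0$ is of order $1/s_k$; since already $s_0>2\cdot 10^{15}$, this correction is negligible in all subsequent calculations. Dividing by $m_{k+4}=256\,m$ yields $m(a_k - a_{k+4}) \geq \tfrac{85}{256}\log_2 24 - o(1)$, and combining with $\log_2 s(K)\geq m_1\log_2 s_k - \log_2|P_1|$ from Lemma~\ref{induction}(1), the displayed condition collapses to the explicit numerical inequality
\[
|P_1|\cdot\alpha\ \leq\ 24^{85 m_1/256},
\]
which I need to establish for every primitive $P_1$ of degree $m_1\neq 4$ with no $\Alt(t)$, $t>4$, composition factor.

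I would then verify this case by case on $m_1$, mirroring the proof of Proposition~\ref{prop1}. For $m_1\in\{2,3\}$ the primitive groups are subgroups of $S_{m_1}$, and I would use the sharp bound $s(K)\geq \binom{s_k+m_1-1}{m_1}$ from Lemma~\ref{induction}(2) in place of the generic $s_k^{m_1}/|P_1|$. For $5\leq m_1\leq 24$ the maximum order of a qualifying primitive group is read from Table~3 together with Lemma~\ref{lem3}. For $m_1\geq 25$, $m_1\neq 32$, Theorem~\ref{thm1}(3) yields $|P_1|\leq 2^{0.76 m_1}$, which leaves comfortable margin in $0.76\,m_1 + \log_2\alpha \leq \tfrac{85}{256}\,m_1\log_2 24 \approx 1.523\,m_1$; the case $m_1=32$ is handled by Theorem~\ref{thm1}(2). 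The main obstacle I expect is the tight check at $m_1=12$, where $P_1=M_{12}$ gives $|M_{12}| = 95040$ against the threshold $24^{85\cdot 12/256}/\alpha\approx 110000$: the inequality passes, but only barely---which is precisely why $M_{12}$ is the penultimate layer of the extremal tower. The exclusion $\deg(P_1)\neq 4$ is also sharp: for $P_1=S_4$ one has $|S_4|\cdot\alpha = 24^{4/3}$, which narrowly exceeds $24^{85/64}=24^{85\cdot 4/256}$, so the reduced inequality fails---reflecting the fact that $S_4$ is the optimal continuation of the tower.
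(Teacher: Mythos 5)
Your proposal is correct and follows essentially the same route as the paper: both reduce the claim to the single numerical condition $\frac{\log_2|P_1|+\log_2\alpha}{\deg(P_1)}\leq\frac{85}{256}\log_2 24-(\text{a negligible correction of order }1/s_k)$, the paper by chaining the bound $s(G\wr S_4)\leq\frac{(s(G)+3)^4}{24}$ four times and you by the equivalent expansion $\log_2 s_{k+4}=256\log_2 s_k-85\log_2 24+\varepsilon_k$, and both then verify the condition case by case on $\deg(P_1)$ using Table 3 and Mar\'oti's bounds. Your identification of the tight cases ($\deg(P_1)=12$ with $M_{12}$, and the sharp failure at $\deg(P_1)=4$) is accurate and matches the structure of the paper's argument.
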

\begin{proof}
Let $A=s(H)$ and $n=\deg(H)=24 \cdot 12 \cdot 4^k$.

$B=s(H \wr S_4) = {{A+3} \choose {A-1}}= {{A+3} \choose 4}$ by Lemma ~\ref{induction}(2).

$B=\frac {(A+3)(A+2)(A+1)A} {24}$ and $B+3 \leq \frac {(A+3)^4} {24}$ since $A \geq 2,017,737,434,447,329$.

By Lemma ~\ref{induction}(2) \[s(H \wr S_4 \wr S_4) = {{B+3} \choose {B-1}}= {{B+3} \choose 4}=\frac {(B+3)(B+2)(B+1)B} {24} \leq \frac {(B+3)^4} {24} \leq \frac {(\frac {(A+3)^4} {24})^4} {24}. \]

Thus we have that \[rs(H \wr S_4 \wr S_4) \leq \frac {\log_2(A+3)} n- \frac {\log_2 24} {4n} - \frac {\log_2 24} {16n}.\]

By a similar argument, we will get that \[rs(H \wr S_4 \wr S_4 \wr S_4 \wr S_4) \leq \frac {\log_2(A+3)} n- \frac {\log_2 24} {4n} - \frac {\log_2 24} {16n} - \frac {\log_2 24} {64n} - \frac {\log_2 24} {256n}.\]

Let $m=\deg(P_1)$. By Lemma ~\ref{induction}(1), we have \[rs(H \wr P_1 \cdots \wr P_j) \geq \frac {\log_2 A} n- \frac {\log_2 |P_1|} {mn} - \frac {\log_2 \alpha} {mn}.\]

It suffices to show that \[\frac {\log_2 24} 4 + \frac {\log_2 24} {16} + \frac {\log_2 24} {64} + \frac {\log_2 24} {256} - \frac {\log_2 |P_1|} m - \frac {\log_2 \alpha} m \geq  \log_2 \frac {A+3} A.\]

Since $A \geq 2,017,737,434,447,329$. $\log_2 \frac {A+3} A \leq 0.0000000000000023$.

It suffices to show that \[\frac {\log_2 |P_1|} m + \frac {\log_2 \alpha} m  \leq 1.522350830317569088\]

We may use Theorem ~\ref{thm1}(2) and Table 3 to estimate $|P_1|$ and it is easy to check that the previous inequality is satisfied.
\end{proof}

Let $G_0=M_{24} \wr M_{12}$ and $G_k= G_0 \wr S_4 \wr \cdots \wr S_4$.

Thus $a_0=s(G_0)=2,017,737,434,447,329$, $s_1=s(G_1)= {s_0+3 \choose 4}$, $s_2=s(G_2)= {{s_1+3} \choose 4}$ and $s_k=s(G_k)={{s_{k-1}+3} \choose 4}$ by Lemma ~\ref{induction}(2).

We know that $rs(G_k)= \frac {\log_2 s(G_k)} {24 \cdot 12 \cdot 4^k}=\frac {\log_2 s_k} {24 \cdot 12 \cdot 4^k}=a_k$.

\begin{theorem} \label{thm4}
  Let $G$ be a permutation group of degree $n$ where $G$ does not contain any alternating group $\Alt(k), k>4$ as a composition factor. Let $s(G)$ denote the number of set-orbits of $G$. Then we have \[\inf(\frac {\log_2 s(G)} n)= \lim_{k \mapsto \infty} \frac {\log_2 s(M_{24} \wr M_{12} \wr S_4 \wr \cdots \wr S_4)} {24 \cdot 12 \cdot 4^k}=\lim_{k \mapsto \infty} a_k.\]
\end{theorem}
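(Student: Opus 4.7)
The plan is to prove the two inequalities $\inf(\log_2 s(G)/n) \le \lim_{k \to \infty} a_k$ and $\inf(\log_2 s(G)/n) \ge \lim_{k \to \infty} a_k$ separately. The upper bound is immediate: each group $G_k = M_{24} \wr M_{12} \wr S_4 \wr \cdots \wr S_4$ (with $k$ factors of $S_4$) has composition factors drawn from $\{M_{24}, M_{12}, C_2, C_3\}$, contains no $\Alt(t)$ with $t > 4$ as a composition factor, and satisfies $rs(G_k) = a_k$ by construction, so letting $k \to \infty$ gives $\inf \le \lim a_k$.

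For the lower bound, the structural heavy lifting is already done in Propositions~\ref{prop1} and~\ref{prop2} and Theorem~\ref{thm3}, so the remaining task is essentially an assembly argument. If $G$ is intransitive with orbit restrictions $G_1, \ldots, G_\ell$ of degrees $n_1, \ldots, n_\ell$, then Lemma~\ref{lem2} yields $\log_2 s(G) \ge \sum_i n_i \, rs(G_i)$, so $rs(G)$ is a weighted average of the $rs(G_i)$; each $G_i$ inherits the composition-factor hypothesis, and an induction on $n$ reduces the problem to transitive $G$. For transitive $G$, write $G \lesssim K \wr P_1 \wr \cdots \wr P_j$ with $K$ primitive. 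Lemma~\ref{lem4} handles $j = 0$, Proposition~\ref{prop1} handles $K \not\cong M_{24}$, and Proposition~\ref{prop2} handles $K \cong M_{24}$ with $P_1 \not\cong M_{12}$; in each of these cases $rs(G) \ge a_2 > \lim a_k$.

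The remaining case is $G \lesssim M_{24} \wr M_{12} \wr P_2 \wr \cdots \wr P_j$. Here I would look for the smallest index $i \ge 2$ with $\deg(P_i) \ne 4$ (vacuous when $j = 1$). Since the only primitive groups of degree $4$ are $A_4$ and $S_4$, each of $P_2, \ldots, P_{i-1}$ embeds in $S_4$, so iterated applications of Lemma~\ref{lem1} give $rs(G) \ge rs(G_{i-2} \wr P_i \wr \cdots \wr P_j)$; Theorem~\ref{thm3} with $H = G_{i-2}$ then yields $rs(G) \ge rs(G_{i+2}) = a_{i+2} > \lim a_k$. If no such $i$ exists, every remaining $P_i$ embeds in $S_4$, so $G \lesssim G_{j-1}$ and $rs(G) \ge a_{j-1} > \lim a_k$. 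Combined with the upper bound, this gives the claimed equality. The main delicate bookkeeping I anticipate is verifying that the subgroup relation propagates through the iterated wreath product so that the successive replacements of each degree-$4$ $P_i$ by $S_4$ preserve the containment $G \lesssim \cdots$; this is a standard property of wreath products but must be tracked carefully level-by-level so that Lemma~\ref{lem1} applies at each stage.
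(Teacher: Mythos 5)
Your proposal is correct and follows essentially the same route as the paper: Lemma~\ref{lem4} for primitive groups, Propositions~\ref{prop1} and~\ref{prop2} to force the base $M_{24}\wr M_{12}$, and Theorem~\ref{thm3} applied at the first level whose primitive factor has degree different from $4$. In fact your write-up is more complete than the paper's own very terse argument, which leaves the upper-bound direction, the reduction from intransitive to transitive groups, and the level-by-level bookkeeping for the iterated wreath product implicit.
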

\begin{proof}
Let $M=\lim_{k \mapsto \infty} a_k$. Then clearly $M < a_2$. Suppose $G$ is primitive, then $rs(G) \geq a_2$ by Lemma ~\ref{lem4}.

Now we may assume $G$ is not primitive.

By Proposition ~\ref{prop1} we know that $G$ is induced from $M_{24}$.

By Proposition ~\ref{prop2} we know that $G$ is induced from $M_{24} \wr M_{12}$.

By Theorem ~\ref{thm3}, we have \[\inf(\frac {\log_2 s(G)} n)= \lim_{k \mapsto \infty} \frac {\log_2 s(M_{24} \wr M_{12} \wr S_4 \wr \cdots \wr S_4)} {24 \cdot 12 \cdot 4^k}.\]
\end{proof}

\textbf{Remark 1.} One can get a good estimate of the limit using the proof of Theorem ~\ref{thm3}.

Using the same notation, we have $G_0=M_{24} \wr M_{12}$ and we set $n_0=24 \cdot 12$. Thus by Theorem ~\ref{thm2}, we know that $M \geq 0.1712268716679245433$.

By the proof of Theorem ~\ref{thm3}, we have that $rs(G_0 \wr S_4 \wr S_4 \cdots) \leq \frac {\log_2(s_0+3)} {n_0}- \frac {\log_2 24} {4 n_0} - \frac {\log_2 24} {16 n_0} -\cdots$ where .

Clearly \[M=\lim_{k \mapsto \infty} a_k \leq \frac {\log_2(s_0+3)} n_0- \frac {\log_2 24} {n_0} \cdot( \frac 1 4+ \frac 1 {16} + \cdots) = \frac {\log_2(s_0+3)} {n_0}- \frac {\log_2 24} {3n_0} \approx 0.1712268716679245433.\] Taking into consideration the possible mistakes in the last digit, the following bound is guaranteed.
\[0.1712268716679245432 < M < 0.1712268716679245434.\]

\textbf{Remark 2.}
The following results are ~\cite[Theorem 2.6]{Keller} and ~\cite[Corollary 2.7]{Keller}.

\begin{theorem}
There is a number $C$ such that the following holds. Let $G$ be a finite group, $p$ be a prime not dividing $|G|$, and let $V$ be a finite faithful, irreducible $\FF_p G$-module. If $p > C$, then $k(GV) \geq 2 \sqrt {p - 1}$.
\end{theorem}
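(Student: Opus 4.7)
The plan is to analyze $k(GV)$ via the standard Clifford/orbit-counting identity $k(GV) = \sum_{[v]} k(C_G(v))$, where the sum runs over representatives of the $G$-orbits on $V$, and then to case-split on $d = \dim_{\FF_p} V$.

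The one-dimensional case $d = 1$ is the tight case and dictates the form of the bound. Here $V \cong \FF_p$ and $G \leq \Aut(V) = \FF_p^\times$ is cyclic with $|G| \mid p-1$. The zero orbit contributes $k(G) = |G|$ classes, and the remaining $(p-1)/|G|$ regular orbits each contribute one class, so $k(GV) = |G| + (p-1)/|G| \geq 2\sqrt{p-1}$ by AM-GM, with equality when $|G| = \sqrt{p-1}$. Notice that this case uses none of the hypothesis $p > C$; it is exactly what forces the shape of the inequality.

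For $d \geq 2$ the crude bound $k(GV) \geq k(G) + (p^d - 1)/|G|$, obtained from the zero orbit together with the observation that each nonzero orbit has size at most $|G|$, already dominates $2\sqrt{p-1}$ once $p$ is large, provided we have a polynomial-in-$p$ upper bound on $|G|$. Since $G$ is a faithful $p'$-subgroup of $\GL_d(\FF_p)$ acting irreducibly, I would invoke Jordan-type bounds and Aschbacher's reduction, splitting into the imprimitive case (handled by induction on $d$ using the wreath-product decomposition of $G$) and the primitive case (treated by running through Aschbacher's geometric classes, reducing to classical or almost simple groups, and using the corresponding order estimates). In each branch one obtains $|G| \leq c(d) \cdot (p^d - 1)$, after which the desired inequality follows for all $p$ large enough in terms of $d$; since $d$ is constrained by $p^d \leq |V|$ and $|G|$ bounds, only finitely many values of $d$ really need delicate treatment.

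The main obstacle is the borderline dimension $d = 2$, where $p^2 - 1$ is only one power of $p$ above the target $2\sqrt{p-1} \sim 2 p^{1/2}$. Here the crude estimate is not good enough unless $|G|$ is bounded sharply; one needs the full classification of $p'$-subgroups of $\GL_2(\FF_p)$ for large $p$ (giving essentially $|G| \leq (p^2-1)(p-1)$), together with a careful accounting of the contributions $k(C_G(v))$ for nonzero $v$ so as not to lose the $k(G)$ term. The constant $C$ in the hypothesis exists precisely to absorb the finitely many genuinely exceptional primitive configurations (for instance normalizers of extraspecial $r$-groups in $\GL_d(\FF_p)$ with small residual parameters) that appear at small primes; its role is purely to make the asymptotic estimates in the $d \geq 2$ analysis uniform in $G$.
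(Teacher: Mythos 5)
This statement is not proved in the paper at all: it is quoted verbatim in Remark~2 as Theorem~2.6 of Keller's paper on lower bounds for the number of conjugacy classes, and the surrounding text even notes that the constant $C$ there depends on unspecified constants from Babai--Pyber and from Keller's earlier work on the $k(GV)$-problem. So there is no in-paper proof to compare against; your attempt has to stand on its own, and it does not.

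Your $d=1$ case is correct and correctly identifies why the bound has the shape $2\sqrt{p-1}$. The gap is in everything you say for $d\geq 2$. The crude estimate $k(GV)\geq k(G)+(p^d-1)/|G|$ only beats $2\sqrt{p-1}$ via the orbit count when $|G|\lesssim p^{d-1/2}$, and irreducible $p'$-subgroups of $\GL_d(\FF_p)$ violating this exist in \emph{every} dimension, not just $d=2$: normalizers of maximal tori (order about $d!\,(p-1)^d$ or $d(p^d-1)$), normalizers of symplectic-type $r$-groups, and classical groups defined over subfields all have order comparable to or exceeding $|V|$. For all of these the orbit-count term is useless and one must produce a lower bound on $k(G)$ itself, or on $\sum_v k(C_G(v))$ over orbit representatives --- that is the actual content of Keller's proof, and it is exactly where the Babai--Pyber set-orbit theorem (the result this very paper sharpens) gets used. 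Your proposal never supplies such a bound; ``running through Aschbacher's classes and using order estimates'' does not address it. Separately, your claim that ``only finitely many values of $d$ really need delicate treatment'' is unfounded: $d$ is unbounded, nothing constrains it in terms of $p$, and if your threshold on $p$ depends on $d$ you do not obtain a single constant $C$ valid for all $G$ and $V$. The uniformity in $d$ is precisely the hard part, and it is missing.
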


\begin{theorem}
Let $C$ be the constant occurring in the previous theorem. Let $G$ be a finite group. Suppose that $p$ is a prime dividing $|G|$ and that $p>C$. Then
\[k(G) \geq 2 \sqrt{p-1}.\]
\end{theorem}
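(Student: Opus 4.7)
The plan is to deduce this corollary from Theorem~2.6 by induction on $|G|$, using the elementary inequality $k(G) \geq k(G/N)$ for every $N \triangleleft G$ (which holds because distinct classes in $G/N$ pull back to disjoint unions of classes in $G$). The induction lets us reduce to a minimal situation in which Theorem~2.6 applies directly.

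First I would eliminate the $p'$-core. If $O_{p'}(G) \neq 1$, set $\bar G := G/O_{p'}(G)$; then $p \mid |\bar G|$, $|\bar G| < |G|$, and by induction $k(\bar G) \geq 2\sqrt{p-1}$, so $k(G) \geq k(\bar G)$ finishes this case. Thus we may assume $O_{p'}(G) = 1$; combined with $p \mid |G|$ this forces $O_p(G) \neq 1$. Pick a minimal normal subgroup $V \leq O_p(G)$ of $G$: this $V$ is elementary abelian of exponent $p$, and $\tilde G := G/C_G(V)$ acts faithfully and irreducibly on $V$, making $V$ a faithful irreducible $\FF_p \tilde G$-module. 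If $p \mid |\tilde G|$, applying the induction hypothesis to the proper quotient $\tilde G$ again closes the argument. Otherwise $p \nmid |\tilde G|$, Theorem~2.6 applies to the pair $(\tilde G, V)$, and yields $k(V \rtimes \tilde G) \geq 2\sqrt{p-1}$.

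The remaining step, which I expect to be the main obstacle, is the comparison $k(G) \geq k(V \rtimes \tilde G)$. I would argue this by Clifford-theoretic bookkeeping: count irreducible characters of both groups via orbits of the respective group on $\hat V$ together with character counts of the corresponding inertia subgroups. Passing from $V \rtimes \tilde G$ to $G$ enlarges the inertia subgroups by the factor $C_G(V)/V$, which acts trivially on $V$ and hence fixes every element of $\hat V$, so the class count can only grow; equality holds precisely when $V$ is self-centralising. The delicate point is verifying that this enlargement really produces no cancellation in the orbit sum when $C_G(V) \supsetneq V$. Combined with the output of Theorem~2.6, this comparison yields $k(G) \geq k(V\rtimes \tilde G) \geq 2\sqrt{p-1}$, completing the induction.
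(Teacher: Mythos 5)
First, a remark on context: the paper you are working from does not prove this statement at all --- it is quoted verbatim in Remark 2 as \cite[Corollary 2.7]{Keller}, so there is no in-paper proof to compare against. Judged on its own merits, your proposal has a genuine gap at its central reduction step. You claim that $O_{p'}(G)=1$ together with $p\mid |G|$ forces $O_p(G)\neq 1$. This is false for non-solvable groups: any nonabelian simple group $G$ with $p\mid |G|$ (e.g.\ $G=\Alt(5)$, $p=5$) has $O_{p'}(G)=O_p(G)=1$, and more generally a minimal counterexample could have trivial Fitting subgroup, with $F^*(G)$ a product of nonabelian simple components. In that situation there is no normal elementary abelian $p$-subgroup $V$ to feed into Theorem~2.6, so your induction never gets off the ground. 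Since the theorem is asserted for arbitrary finite groups, this is not a boundary case but the heart of the matter: handling groups with $F(G)=1$ is precisely where one needs separate input (in the literature, bounds on $k(S)$ for simple groups $S$ in terms of the primes dividing $|S|$, resting on the classification), and no amount of quotienting by $O_{p'}$ or passing to $G/C_G(V)$ reaches it. Your argument would be essentially complete only under an additional solvability (or at least $F(G)\neq 1$) hypothesis.

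A secondary, independent concern is the comparison $k(G)\geq k(V\rtimes \tilde G)$ that you yourself flag as the main obstacle. The Clifford-theoretic bookkeeping is not as benign as "the inertia subgroups only get larger": the number of irreducible characters of $I_G(\lambda)$ lying over an invariant $\lambda\in\Irr(V)$ equals $k(I_G(\lambda)/V)$ only when $\lambda$ extends (Gallagher); otherwise it is a count of projective characters for a nontrivial cocycle, which can be strictly smaller, and it is not clear that it still dominates $k(I_G(\lambda)/C_G(V))$. As written, this step is an unproved assertion rather than a reduction, so even in the $O_p(G)\neq 1$ branch the argument is incomplete.
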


It was mentioned in ~\cite{Keller} that the value $C$ one could get from the proof would be extremely large. But even finding such a bad value for $C$ seems to be quite a difficult task, given that the $C$ also depends on the unspecified constants in ~\cite{BAPYBER} and in ~\cite[Theorem 3·5(a)]{Keller1}.

The result of this paper would be helpful in finding an estimate of the constant $C$ in that paper since it provides the best possible estimate for the result in ~\cite{BAPYBER}.


\begin{table}[ht]
\caption{Partitions of $12$ and Number of orbits of $M_{24}$} 
\centering 
\begin{tabular}{c c c} 
\hline
\hline 
Number of blocks & Type & Number of orbits \\[0.5ex] 
\hline 
12 & (1,1,1,1,1,1,1,1,1,1,1,1) & 5040  \\ 
11 & (2,1,1,1,1,1,1,1,1,1,1) & 2520 \\
10 & (3,1,1,1,1,1,1,1,1,1) & 840 \\
10 & (2,2,1,1,1,1,1,1,1,1) & 1260 \\
9 & (4,1,1,1,1,1,1,1,1) & 210 \\
9 & (3,2,1,1,1,1,1,1,1) & 420 \\
9 & (2,2,2,1,1,1,1,1,1) & 630 \\
8 & (5,1,1,1,1,1,1,1) & 42 \\
8 & (4,2,1,1,1,1,1,1) & 105 \\
8 & (3,3,1,1,1,1,1,1) & 140 \\
8 & (3,2,2,1,1,1,1,1) & 210 \\
8 & (2,2,2,2,1,1,1,1) & 318 \\
7 & (6,1,1,1,1,1,1) & 7 \\
7 & (5,2,1,1,1,1,1) & 21 \\
7 & (4,2,2,1,1,1,1) & 54 \\
7 & (4,3,1,1,1,1,1) & 35 \\
7 & (3,3,2,1,1,1,1) & 70 \\
7 & (3,2,2,2,1,1,1) & 108 \\
7 & (2,2,2,2,2,1,1) & 165 \\
6 & (7,1,1,1,1,1) & 1 \\
6 & (6,2,1,1,1,1) & 4 \\
6 & (5,3,1,1,1,1) & 7 \\
6 & (5,2,2,1,1,1) & 12 \\
6 & (4,4,1,1,1,1) & 11 \\
6 & (4,3,2,1,1,1) & 19 \\
6 & (4,2,2,2,1,1) & 30 \\
6 & (3,3,3,1,1,1) & 24 \\
6 & (3,3,2,2,1,1) & 38 \\
6 & (3,2,2,2,2,1) & 59 \\
6 & (2,2,2,2,2,2) & 93 \\
5 & (8,1,1,1,1) & 1 \\
5 & (7,2,1,1,1) & 1 \\
5 & (6,3,1,1,1) & 2 \\
5 & (6,2,2,1,1) & 2 \\
5 & (5,4,1,1,1) & 4 \\
5 & (5,3,2,1,1) & 5 \\
5 & (5,2,2,2,1) & 8 \\
5 & (4,4,2,1,1) & 7 \\
5 & (4,3,3,1,1) & 8 \\
5 & (4,3,2,2,1) & 12 \\
5 & (4,2,2,2,2) & 20 \\
5 & (3,3,3,2,1) & 15 \\
5 & (3,3,2,2,2) & 23 \\
[1ex] 
\hline 
\end{tabular}
\label{table:par24nu1} 
\end{table}

\begin{table}[ht]
\caption{Partitions of $12$ and Number of orbits of $M_{12}$, continued} 
\centering 
\begin{tabular}{c c c} 
\hline
\hline 
Number of blocks & Type & Number of orbits \\[0.5ex] 
\hline 
4 & (9,1,1,1) & 1 \\
4 & (8,2,1,1) & 1 \\
4 & (7,3,1,1) & 1 \\
4 & (6,4,1,1) & 2 \\
4 & (5,5,1,1) & 3 \\
4 & (7,2,2,1) & 1 \\
4 & (6,3,2,1) & 2 \\
4 & (6,2,2,2) & 3 \\
4 & (5,4,2,1) & 3 \\
4 & (5,3,3,1) & 3 \\
4 & (5,3,2,2) & 4 \\
4 & (4,4,3,1) & 4 \\
4 & (4,3,3,2) & 6 \\
4 & (4,4,2,2) & 6 \\
4 & (3,3,3,3) & 8 \\
3 & (10,1,1) & 1 \\
3 & (9,2,1) & 1 \\
3 & (8,3,1) & 1 \\
3 & (8,2,2) & 1 \\
3 & (7,4,1) & 1 \\
3 & (7,3,2) & 1 \\
3 & (6,5,1) & 2 \\
3 & (6,4,2) & 2 \\
3 & (6,3,3) & 2 \\
3 & (5,5,2) & 2 \\
3 & (5,4,3) & 2 \\
3 & (4,4,4) & 3 \\
2 & (11,1) & 1 \\
2 & (10,2) & 1 \\
2 & (9,3) & 1 \\
2 & (8,4) & 1 \\
2 & (7,5) & 1 \\
2 & (6,6) & 2 \\
1 & (12) & 1 \\
[1ex] 
\hline 
\end{tabular}
\label{table:par24nu2} 
\end{table}

\begin{table}[ht]
\caption{Maximum Order of Primitive Groups not containing $A_n$} 
\centering 
\begin{tabular}{c c c} 
\hline
\hline 
Degree & Maximum Order & Second Largest Order\\[0.5ex] 
\hline 
5 & 20 \\
6 & 120\\
7 & 168\\
8 & 1344\\
9 & 1512\\
10 & 1440\\
11 & 7920\\
12 & 95040  & 7920\\
13 & 5616\\
14 & 2184\\
15 & 20160\\
16 & 322560\\
17 & 16320\\
18 & 4896\\
19 & 342\\
20 & 6840\\
21 & 120960\\
22 & 887040\\
23 & 10200960\\
24 & 244823040\\
25 & 28800\\
26 & 31200\\
27 & 303264\\
28 & 1451520\\
29 & 812\\
30 & 24360\\
31 & 9999360\\
32 & 319979520\\
33 & 163680\\
34 & N/A\\
35 & 40320\\
36 & 1451520\\
37 & 1332\\
38 & 50616\\
[1ex] 
\hline 
\end{tabular}
\label{table:primitivesmall} 
\end{table}


\end{document}